\newcommand{\F}{{{\mathcal F}}}
\newcommand{\U}{{{\mathcal U}}}
\newcommand{\pw}{{{\mathcal P}}}
\newcommand{\pomega}{\pw(\omega)}
 \newtheorem*{theorem*}{Theorem}
 \newtheorem*{observation*}{Observation}
 \newtheorem{theorem}[subsection]{Theorem}
 \newtheorem{proposition}[subsection]{Proposition}
 \newtheorem{corollary}[subsection]{Corollary}
 \newtheorem{observation}[subsection]{Observation}
 \newtheorem{question}[subsection]{Question}
 \theoremstyle{definition}
 \newtheorem{definition}[subsection]{Definition}
 \newtheorem*{definition*}{Definition}
 \newtheorem*{question*}{Question}
 \newtheorem*{note}{Note}
 \newtheorem*{ack}{Acknowledgements}
\begin{document}
\title{Filter convergence in $\beta\omega$}

\author{Jonathan Verner}
\address{Department of Logic, Charles University\\
Palachovo nám. 2\\ 116 38 Praha 1, Czech Republic}
\email{jonathan.verner{@}ff.cuni.cz}
%\thanks{The author was partially supported by }

\subjclass[2010]{Primary 54A20, 54H05 }
\keywords{analytic filter, meager filter, filter convergence, Rudin-Keisler ordering}

\begin{abstract}
 We give a combinatorial necessary condition on a filter $\F$ which admits injective $\F$-convergent sequences in $\beta\omega$.
 We also show that no analytic filter $\F$ admits an injective $\F$-convergent sequence in $\beta\omega$. This answers a question of
 T. Banakh, V. Mychaylyuk and L. Zdomskyy.
\end{abstract}
\maketitle

\section*{Introduction}

It is well known that $\beta\omega$, the Čech-Stone compactification of the natural numbers, cannot contain convergent sequences.
%\footnote{This fact is also connected to the long standing problem of Efimov, which asks whether (it is consistent that) any compact space either contains a 
%convergent sequence or a copy of $\beta\omega$, is still open.}
On the other hand, if we generalize the notion of convergence to convergence
with respect to a filter\footnote{Unfamiliar concepts used in this introduction will be defined below.}, we have the following easy observation.

\begin{observation*} If $X$ is a compact space and $\mathcal U$ is an ultrafilter, then any sequence in $X$ converges with respect to $\mathcal U$.
\end{observation*}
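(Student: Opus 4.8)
The plan is to argue by contradiction, using compactness to extract a finite open cover all of whose pieces are $\mathcal{U}$-small. Recall that a sequence $(x_n)_{n\in\omega}$ in $X$ is said to $\mathcal{U}$-converge to a point $p$ if $\{n : x_n \in V\} \in \mathcal{U}$ for every open neighborhood $V$ of $p$; the claim is merely the existence of such a $p$, so no separation hypothesis on $X$ will be needed. Suppose toward a contradiction that no point of $X$ is a $\mathcal{U}$-limit of $(x_n)$.

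Then for each $p \in X$ I would choose an open neighborhood $V_p$ of $p$ witnessing the failure of convergence, i.e. $\{n : x_n \in V_p\} \notin \mathcal{U}$. Here the ultrafilter hypothesis enters: a set fails to lie in $\mathcal{U}$ exactly when its complement does, so I may set $A_p := \{n : x_n \notin V_p\}$ and conclude $A_p \in \mathcal{U}$ for every $p$. The family $\{V_p : p \in X\}$ is an open cover of $X$, so compactness supplies finitely many points $p_1,\dots,p_k$ with $X = \bigcup_{i=1}^k V_{p_i}$.

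The finish is then a finite-intersection computation. Since $\mathcal{U}$ is closed under finite intersections, $\bigcap_{i=1}^k A_{p_i} \in \mathcal{U}$. But $n \in \bigcap_{i=1}^k A_{p_i}$ would mean $x_n \notin V_{p_i}$ for all $i$, which is impossible because the $V_{p_i}$ cover $X$ and hence contain $x_n$. Thus $\bigcap_{i=1}^k A_{p_i} = \emptyset$, forcing $\emptyset \in \mathcal{U}$ and contradicting that $\mathcal{U}$ is a proper filter. This contradiction establishes that some $p \in X$ is a $\mathcal{U}$-limit of the sequence.

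The argument is routine, and I do not anticipate a genuine obstacle; the one point deserving attention is the passage from $\{n : x_n \in V_p\} \notin \mathcal{U}$ to $A_p \in \mathcal{U}$, which is precisely where the ultrafilter property (rather than mere filter-ness) is used. For an arbitrary filter this step breaks down, which is exactly why the hypothesis that $\mathcal{U}$ is an ultrafilter cannot be dropped, and why the rest of the paper must work harder for general filters.
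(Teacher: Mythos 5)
Your proof is correct: the contradiction argument via the ultrafilter dichotomy (a set or its complement lies in $\mathcal U$), a finite subcover from compactness, and closure of $\mathcal U$ under finite intersections is exactly the standard argument for this classical fact. The paper itself states the observation without proof, treating it as well known, so there is no alternative route to compare against; your write-up, including the remark pinpointing where ultrafilterness (as opposed to mere filterness) is used, is a complete and accurate justification.
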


Ultrafilters are very far from the Fréchét filter, which corresponds to the standard convergence, so the above observation is not very
surprising. What is surprising, however, is that one can get very close --- in the sense of category --- to having convergent sequences in every compact space. 
More precisely, the following (see \cite{BMZ11}) is true:

\begin{theorem*}[Banakh, Mychaylyuk, Zdomskyy] Any infinite compact space contains an injective sequence which converges with respect to some meager filter.
\end{theorem*}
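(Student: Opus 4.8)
The plan is to reduce the statement to a single combinatorial–topological question about a cluster point, and then to settle that question. Fix an infinite compact (Hausdorff) space $X$. Since $X$ is infinite and Hausdorff, it contains a countable set $D=\{d_n:n\in\omega\}$ that is discrete, with pairwise disjoint open neighbourhoods; by compactness there is a point $x\notin D$ every neighbourhood of which meets $D$ in an infinite set, so that $\{n:d_n\in U\}$ is infinite for each open $U\ni x$. The key observation is that the filter is essentially forced: if an injective sequence $(x_n)$ is $\F$-convergent to a point $x$, then $\{n:x_n\in U\}\in\F$ for every neighbourhood $U$ of $x$, so the \emph{trace filter} $\F_0$ generated by $\{\,\{n:x_n\in U\}: U\ni x\,\}$ together with the Fréchet filter satisfies $\F_0\subseteq\F$. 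As a subset of a meager subset of $\pomega$ is meager, a witnessing meager filter exists \emph{iff} $\F_0$ is itself meager, in which case $\F_0$ already witnesses the convergence. Thus it suffices to produce an injective sequence in $X$ whose trace filter at some cluster point is meager.

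Next I would invoke the Talagrand–Jalali-Naini characterisation: a filter is meager iff there is a partition of $\omega$ into finite intervals $(I_k)$ such that every member of the filter meets all but finitely many $I_k$. Applied to $\F_0$ and read through the blocks $b_k:=\{x_n:n\in I_k\}$, this says precisely that $x\in\liminf_k b_k$, i.e.\ every neighbourhood of $x$ meets all but finitely many of the finite sets $b_k$ (it is enough to test this on the generators $\{n:x_n\in U\}$, since supersets only make meeting easier and the generators are closed under finite intersection). So the whole problem becomes the purely topological task of finding an injective sequence in $X$ whose range can be grouped into consecutive finite blocks whose lower topological limit contains a point.

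The easy case is when $X$ contains a nontrivial convergent sequence $(x_n)\to x$: then every neighbourhood of $x$ contains a cofinite tail, so $(x_n)$ converges with respect to the Fréchet filter, which is a countable — hence meager — subset of $\pomega$. In the remaining case I would build the blocks recursively from $D$ and $x$, taking as block points cluster points (ultrafilter limits) of the pieces of finer and finer finite partitions of the index set $\omega$, arranged so that, whatever neighbourhood $U\ni x$ is inspected, cofinitely many blocks are caught inside $U$; the desired injective sequence is the concatenation of the blocks, and by the Talagrand criterion its trace filter is then meager.

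The hard part is exactly this last construction in the absence of convergent sequences. When $x$ has uncountable character and $X$ has no convergent sequence — the paradigmatic case being $X=\beta\omega$, and more generally a possible Efimov space, so that one cannot appeal to any dichotomy between ``a convergent sequence'' and ``a copy of $\beta\omega$'' — no single point can be reused and singleton blocks cannot converge, which forces the blocks to grow. The genuine obstacle is that the neighbourhoods of $x$ have no countable base, so the $\liminf$ condition cannot be met by enumerating neighbourhoods in $\omega$ steps; one must instead exploit that the neighbourhoods form a filter and propagate finite approximations along a tree so that a \emph{single} interval partition works for all neighbourhoods of $x$ simultaneously. It is worth noting that the meager filter produced this way is necessarily non-analytic for $\beta\omega$ (by the main theorem of the present paper), so no countably generated, Borel, or analytic filter can ever be the output — which is precisely why this step must be so delicate.
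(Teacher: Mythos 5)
You should know at the outset that the paper does not prove this statement: it is quoted in the introduction as a theorem of Banakh, Mychaylyuk and Zdomskyy, with a citation to \cite{BMZ11}, so there is no proof here to compare yours against, and I can only assess your argument on its own terms. The reductions you perform are correct and cleanly justified. The trace filter $\F_0$ of an injective sequence at a limit point (which, once the limit is not a term of the sequence, automatically contains the Fr\'echet filter, since for any finite set of indices there is a neighbourhood of the limit avoiding those terms) is contained in every witnessing filter, so a meager witness exists iff $\F_0$ itself is meager; the Jalali-Naini--Talagrand characterisation then translates this into the existence of an interval partition $(I_k)$ such that the point $x$ lies in the lower topological limit of the blocks $b_k=\{x_n:n\in I_k\}$; and the case where $X$ contains a nontrivial convergent sequence is indeed trivial, the Fr\'echet filter being countable and hence meager.

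The problem is that your proposal stops exactly where the theorem begins. In a compact space with no nontrivial convergent sequences --- $\beta\omega$ being the paradigm, and the very space this paper is about --- singleton blocks are impossible (as you note, that would be an honest convergent sequence), so one must actually produce growing finite blocks $b_k$ and a point $x$ with $x\in\liminf_k b_k$; the paragraph beginning ``I would build the blocks recursively'' is a statement of intent, and your final paragraph concedes that this is the ``hard part.'' To see concretely that it is not a routine verification, specialise to $X=\beta\omega$: the blocks are finite sets of ultrafilters, a basic neighbourhood $\hat{A}$ of the prospective limit $x$ meets $b_k$ iff $A\in p$ for some $p\in b_k$, so the requirement is that every $A\in x$ belong to $\bigcup_{p\in b_k}p$ for all but finitely many $k$. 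Equivalently, the family
\begin{displaymath}
T=\Bigl\{C\subseteq\omega:\ C\in\bigcap_{p\in b_k}p\ \text{for infinitely many }k\Bigr\}
\end{displaymath}
must be contained in the ultrafilter $x$, and in particular must have the finite intersection property. Nothing in your sketch controls $T$. The natural way to force this --- making the filters $\bigcap_{p\in b_k}p$ decreasing in $k$, so that $T$ is their intersection and hence a filter --- fails outright: an ultrafilter extending the intersection of finitely many ultrafilters must equal one of them (if $p\supseteq\bigcap_i q_i$ and $p\neq q_i$ for all $i$, pick $A_i\in q_i\setminus p$; then $\bigcup_i A_i$ lies in every $q_i$ but not in $p$), so decreasing intersections force $b_k\subseteq b_{k+1}$, contradicting disjointness of the blocks and injectivity of the sequence. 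Overcoming this tension is precisely the content of the cited theorem (Theorem 2 of \cite{BMZ11}), so what you have is a correct and useful reformulation of the statement, not a proof of it.
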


In the cited paper the authors ask about the borderline between filters $\F$, which admit an injective $\F$-convergent sequence in $\beta\omega$ and those
that don't. In particular, they ask, whether there is some analytic filter and an injective sequence in $\beta\omega$ which would converge with respect to this filter.
Below we give a necessary condition for a filter $\F$ to admit an injective $\F$-convergent sequence in $\beta\omega$. As a corollary we give
a negative answer to the above mentioned question by showing that no analytic filter admits a convergent sequence in $\beta\omega$. 

\section{Definitions}

In this section we recall some basic definitions and facts. We first start by generalizing the notion of a convergent sequence.

\begin{definition} If $\bar{x}=\langle x_n:n<\omega\rangle\subseteq X$ is a sequence, we say that it is \emph{injective}, provided $x_n\neq x_k$ for $n\neq k$.
Given a filter $\F$ on $\omega$ and $x\in X$ we write
\begin{displaymath}
 x = \F-\lim\bar{x}
\end{displaymath}
if for every neighbourhood $U$ of $x$ the set
\begin{displaymath}
 \{ n:x_n\in U\}
\end{displaymath}
is an element of $\F$. We say that $x$ is an \emph{$\F$-limit} of the sequence $\bar{x}$. If some injective sequence $\bar{x}\subseteq X$ has an $\F$-limit in $X$,
we say that \emph{$\F$ admits an $\F$-convergent sequence in $X$}.
\end{definition}

The notion of a filter limit in the case when $\F$ is an ultrafilter was introduced in \cite{Bernstein1970} in the context of nonstandard analysis,
the same notion for filters was considered in \cite{Furstenberg1981,Akin1997} in the context of dynamical systems. 

\begin{observation} If $x = \F-\lim\bar{x}$ and $A\subseteq\omega$ is a \emph{positive} (with respect to $\F$) set of indices, i.e. $A\in\F^{+}=\{X\subseteq\omega:\omega\setminus X\not\in\F\}$,
then $x\in\overline{\{x_n:n\in A\}}$.
\end{observation}

We now turn to filters (and ultrafilters) on $\omega$.

\begin{definition} Recall, that a filter on $\omega$ can be identified with a subset of the Cantor space $\pomega$. 
We thus say that a filter is \emph{closed}, \emph{meager}, $F_\sigma$ or \emph{analytic} if it is such under this identification.
\end{definition}
 
\begin{definition}[\cite{Katetov68,Booth70} for ultrafilters] Given filters $\F,{\mathcal H}$ we say that $\F$ is \emph{Rudin-Keisler} above ${\mathcal H}$ if there is a function
$f:\omega\to\omega$ such that ${\mathcal H} = f_*(\F)=\{A:f^{-1}[A]\in\F\}$. In this case we write $\F\geq_{RK}{\mathcal H}$.
\end{definition}

\begin{proposition}\label{no_analytic_ultrafilters} No analytic filter is RK-above a nonprincipal ultrafilter.
\end{proposition}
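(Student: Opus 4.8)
The plan is to prove the contrapositive: if a filter $\F$ is RK-above a nonprincipal ultrafilter $\U$, then $\F$ cannot be analytic. Suppose $f:\omega\to\omega$ witnesses $\F\geq_{RK}\U$, so $\U=f_*(\F)=\{A\subseteq\omega:f^{-1}[A]\in\F\}$. My strategy is to show that the pushforward $f_*$ is a continuous map on the relevant Polish space and use it to transport analyticity: since the preimage of an analytic set under a continuous map is analytic, I would argue that $\U=f_*(\F)$ inherits analyticity from $\F$. Concretely, the map $g:\pomega\to\pomega$ sending $A\mapsto f^{-1}[A]$ is continuous (indeed, membership of $n$ in $f^{-1}[A]$ depends only on whether $f(n)\in A$, so $g$ is determined coordinatewise), and $f_*(\F)=g^{-1}[\F]$. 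Hence $\U$ would be an analytic subset of $\pomega$.

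The crux is then to derive a contradiction from the existence of an \emph{analytic nonprincipal ultrafilter}. This is the main obstacle and the heart of the argument. The classical fact I would invoke is that every nonprincipal ultrafilter, viewed as a subset of $\pomega$, lacks the Baire property; in particular it is not analytic, since analytic sets have the Baire property (by the Lusin--Sierpi\'nski theorem). The standard proof that an ultrafilter with the Baire property must be trivial goes through a Sierpi\'nski-style argument: one uses the homeomorphism of $\pomega$ induced by flipping finitely many coordinates, together with a translation/density argument, to show that a nonmeager filter with the Baire property is in fact the whole space modulo a measure-zero or meager set, forcing it to be nonproper. I would cite this as a known result rather than reprove it in full.

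There is, however, a gap to watch: I must ensure $f_*(\F)$ is genuinely an ultrafilter and genuinely nonprincipal. That $f_*(\F)$ is an ultrafilter is part of the hypothesis $\F\geq_{RK}\U$. I should verify that $f$ can be taken to be finite-to-one (or at least that $\U$ nonprincipal rules out the degenerate case where $f$ is constant on an $\F$-large set); otherwise the pushforward could collapse to a principal ultrafilter, which is trivially closed. Assuming $\U$ is nonprincipal, any fiber $f^{-1}[\{k\}]$ lies outside $\F$ (else $\{k\}\in\U$), which keeps the pushforward nonprincipal.

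Putting the pieces together, the proof has the shape: (i) reduce to showing no analytic nonprincipal ultrafilter exists; (ii) transport analyticity along the continuous coordinatewise map $g:A\mapsto f^{-1}[A]$ so that $\U=g^{-1}[\F]$ is analytic; (iii) invoke that analytic sets have the Baire property while nonprincipal ultrafilters do not, yielding the contradiction. I expect step (iii) — the Baire-property obstruction to analytic ultrafilters — to be the step doing the real work, though it is a well-known theorem I would be comfortable citing.
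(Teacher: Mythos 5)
Your proposal is correct and follows essentially the same route as the paper: both observe that the map $A\mapsto f^{-1}[A]$ is continuous on $\pomega$, write $\U=g^{-1}[\F]$ to transport analyticity to the ultrafilter, and then cite the classical facts that analytic sets have the Baire property while nonprincipal ultrafilters do not. Your extra worry about the pushforward being a nonprincipal ultrafilter is unnecessary (it is part of the hypothesis, as you yourself note), but it introduces no error.
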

\begin{proof} First notice that if $f:\omega\to\omega$ is a function, then the function $\bar{f}:2^\omega\to2^\omega$ given by $\bar{f}(A)=f^{-1}[A]$ is
continuous. Now suppose $\F$ is analytic and that $\F$ is RK-above some ultrafilter ${\mathcal U}$ as witnessed by $f$. Then, by definition, 
${\mathcal U}= \{A:\bar{f}(A)\in\F\} = \bar{f}^{-1}[\F]$ so ${\mathcal U}$ is a continuous preimage of an analytic set and hence is also analytic.
Since analytic sets have the Baire property (see e.g. \cite{Kechris}, Theorem 29.5) while nonprinciple ultrafilters 
cannot have the Baire property (see e.g. \cite{Kechris}, Exercise 8.50), this implies that $\mathcal U$ is principle.
\end{proof}

We will also need the following definition/fact about $\beta\omega$
% We will need some topological notions:

\begin{definition} 
% A family of sets $\mathcal N$ is a \emph{network} at a point $x\in X$ if for every neighbourhood $U$ of $x$ there is an $N\in{\mathcal N}$
% contained in $U$. 
% A function $f:X\to Y$ is \emph{irreducible} if for any proper closed subset $H\subseteq X$ we have $f[H]\neq f[X]$. A subset $O\subseteq X$
% of a topological space is \emph{regular open} if $O=int(\overline{O})$. Any compact (Hausdorff) space has a base consisting of regular open sets.
The space $\beta\omega$ can be identified with the set of ultrafilters on natural numbers. The base for the topology consists of (clopen) sets of the form 
$\hat{A}$ for $A\subseteq\omega$ where
\begin{displaymath}
 \hat{A} = \{ p\in\beta\omega: A\in p\}
\end{displaymath}
\end{definition}

% \begin{lemma}\label{nonempty_int} If $g:K\to Y$ is an irreducible continuous function onto $Y$ with $K$ compact and $O\subseteq K$ is nonempty open, then $int(g[O])$ is nonempty.
% \end{lemma}
% \begin{proof} If $int(g[O])=\emptyset$, then $Y=\overline{Y\setminus g[O]} = \overline{g[K\setminus O]} = g[K\setminus O]$ by compactness of $K\setminus O$.
% This however contradicts the irreducibility of $g$.
% \end{proof}
% 
% \begin{lemma}\label{contains_preimage} If $g:K\to Y$ is an irreducibile continuous function onto $Y$ with $K$ compact and $O\subseteq K$ is nonempty regular open, then 
% $g^{-1}[int(g[O])]\subseteq O$.
% \end{lemma}
% \begin{proof} Let $F=\overline{O}$ and $V=(K\setminus F) \cap g^{-1}[int(g[F])]$. Then $V$ is open and $g[K\setminus V]=g[K]$ so, by irreducibility $V$ must be empty.
% So $g^{-1}[int(g[O])\subseteq g^{-1}[int(g[F])]$ which is an open subset of $F$ and since $int(F)=O$, the result follows.
% \end{proof}
% 
% \begin{lemma}\label{irreducible_image} There is a continuous irreducible map from $\beta\omega$ onto $[0,1]$.
% \end{lemma}
% \begin{proof} Enumerate ${\mathbb Q}\cap[0,1]$ as $\{q_n:n<\omega\}$ and define $f:\omega\to[0,1]$ by letting $f(n) = q_n$. Then $\beta f:\beta\omega\to[0,1]$ is
% a continuous onto map. Moreover it is one-to-one on $\omega$ which is dense in $\beta\omega$ so it is irreducible. 
% \end{proof}

\section{Main theorem}

\begin{theorem}\label{necessary_condition} If $\F$ is a filter such that for any partition $\langle A_n:n<\omega\rangle$ of $\omega$ into sets not in $\F$ there is a 
$B\subseteq\omega$ such that both $\bigcup_{n\in B}A_n$ and $\bigcup_{n\not\in B} A_n$ are $\F$-positive, then no injective $\bar{x}\subseteq\beta\omega$ has an $\F$-limit.
\end{theorem}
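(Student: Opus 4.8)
The plan is to prove the contrapositive in spirit: I want to show that the stated combinatorial splitting property prevents any injective sequence $\bar{x}=\langle x_n:n<\omega\rangle$ of ultrafilters from having an $\F$-limit $p\in\beta\omega$. The starting observation is that $\beta\omega$ is a compact zero-dimensional Hausdorff space in which distinct points can be separated by clopen sets of the form $\hat{A}$. Given an injective sequence, the intuition is that we should be able to "spread out" the $x_n$ using disjoint clopen neighbourhoods, which will produce a partition of (most of) $\omega$ into pieces on which the sequence behaves coherently, and then the splitting hypothesis will let us carve this partition into two $\F$-positive halves whose closures are disjoint clopen sets, contradicting the $\F$-limit condition.

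Concretely, suppose toward a contradiction that $p=\F\text{-}\lim\bar{x}$. First I would separate $p$ from the sequence: since $\beta\omega$ is Hausdorff and the $x_n$ are points of $\beta\omega$, I would try to find, for each $n$, a clopen set $\hat{C_n}$ witnessing separation, and more importantly build an injective-enough family of pairwise disjoint clopen sets $\hat{A_n}$ with $x_n\in\hat{A_n}$. The natural way to get disjoint clopen sets containing distinct ultrafilters is to choose, for each pair $x_n\neq x_k$, a set witnessing $x_n\in\hat{A}$, $x_k\notin\hat{A}$, and then refine to a pairwise disjoint family; a cleaner route is to use that an injective sequence in $\beta\omega$ can be partitioned so that the $\hat{A_n}$ form a discrete family. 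The goal of this step is to produce a partition $\langle A_n:n<\omega\rangle$ of $\omega$ (into pieces $A_n$ chosen so that $x_n\in\hat{A_n}$) with each $A_n\notin\F$; the condition $A_n\notin\F$ should follow because if some $A_n\in\F$ then the $\F$-limit is forced into the single clopen set $\hat{A_n}$, pinning down too much about $p$ and contradicting injectivity of the tail.

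Once I have such a partition into sets not in $\F$, I apply the hypothesis to obtain $B\subseteq\omega$ with both $\bigcup_{n\in B}A_n$ and $\bigcup_{n\notin B}A_n$ in $\F^{+}$. Set $U=\bigcup_{n\in B}\hat{A_n}$ and $V=\bigcup_{n\notin B}\hat{A_n}$; these are disjoint open sets, and the point $p$, being in $\beta\omega$, lies in the closure of at most... here is the crux. By the Observation, since $\bigcup_{n\in B}A_n$ is $\F$-positive, $p\in\overline{\{x_n:n\in B\}}\subseteq\overline{U}$, and since $\bigcup_{n\notin B}A_n$ is $\F$-positive, likewise $p\in\overline{V}$. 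The contradiction I am aiming for is that $\overline{U}$ and $\overline{V}$ remain disjoint (or at least cannot share the point $p$), which would be automatic if $U,V$ were clopen and complementary in $\beta\omega$; disjointness of the closures is exactly what a discrete/clopen partition buys.

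The main obstacle, and where I expect the real work to lie, is the passage from disjoint open sets to disjoint closed sets: in $\beta\omega$ the closures of two disjoint open sets need not be disjoint, so I cannot naively conclude $\overline{U}\cap\overline{V}=\emptyset$. The remedy is to arrange at the partition stage that $\{\hat{A_n}\}$ is not merely pairwise disjoint but that $U$ and $V$ are themselves clopen, equivalently that $\overline{U}=\widehat{\bigcup_{n\in B}A_n}$ and $\overline{V}=\widehat{\bigcup_{n\notin B}A_n}$; since these two base clopen sets correspond to complementary subsets of $\omega$ they are genuinely disjoint, and $p$ cannot lie in both. So the burden shifts to the separation step: I must show that for an injective sequence I can take the $A_n$ to be an honest partition of $\omega$ (not just a disjoint clopen family) with $x_n\in\hat{A_n}$ and $A_n\notin\F$, using injectivity and the fact that each $x_n$ is a nonprincipal (or at least distinct) ultrafilter to distribute the integers appropriately. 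Making that distribution precise — in particular ensuring each piece avoids membership in $\F$ while still covering $\omega$ — is the step I would spend the most care on.
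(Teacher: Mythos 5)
Your overall strategy---cover the sequence by disjoint clopen sets, extract a partition to which the hypothesis applies, then contradict the limit using two complementary $\F$-positive pieces---is the right one, but the execution conflates the two different roles that $\omega$ plays, and this is a genuine gap rather than a presentational slip. The hypothesis of the theorem has to be applied to a partition of $\omega$ viewed as the \emph{index set} of the sequence, because that is the only way $\F$-positivity interacts with $\F$-convergence: $A\in\F^{+}$ yields $p\in\overline{\{x_k:k\in A\}}$. Your partition $\langle A_n\rangle$ instead consists of subsets of $\omega$ viewed as the \emph{ground set} of the ultrafilters (you demand $A_n\in x_n$, i.e.\ $x_n\in\hat{A_n}$). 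Consequently your appeal to the Observation is incorrect: $\F$-positivity of $\bigcup_{n\in B}A_n$ gives $p\in\overline{\{x_k:k\in\bigcup_{n\in B}A_n\}}$, not $p\in\overline{\{x_n:n\in B\}}$, and the former set of points bears no relation to your open set $U=\bigcup_{n\in B}\hat{A_n}$ unless you also secure the coherence condition ``$k\in A_n$ implies $x_k\in\hat{A_n}$'', i.e.\ $A_n=\{k:x_k\in\hat{A_n}\}$. You gesture at this (``pieces on which the sequence behaves coherently'') but never construct it, and it is a fixed-point-type requirement with no evident construction; without it, your data only yield $B\in\F$ or $\omega\setminus B\in\F$, which does not contradict anything. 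Your justification that $A_n\notin\F$ (``pinning down too much about $p$, contradicting injectivity of the tail'') is also not a valid derivation; the correct reason needs $p\notin\hat{A_n}$, so that $\widehat{\omega\setminus A_n}$ is a neighbourhood of $p$ and hence the \emph{index} set $\{k:x_k\notin\hat{A_n}\}$ lies in $\F$.

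The paper's proof dissolves both difficulties at once by decoupling the two roles: it chooses sets $C_n\subseteq\omega$ whose clopen sets $\hat{C_n}$ are pairwise disjoint, avoid $p$, and together \emph{cover the sequence}, and only then defines the partition on the index side by $A_n=\{k:x_k\in\hat{C_n}\}$. Coherence then holds by definition, $A_n\notin\F$ follows because $p\notin\hat{C_n}$, and the endgame ($p$ contains one of the two complementary sets, contradicting $\F$-positivity of the other union) is essentially the one you describe. Note also that the paper does not put one clopen set around each $x_n$: it picks a subsequence $y_n$ and lets each $\hat{C_n}$ swallow possibly many points of the sequence. This matters, because your plan of pairwise disjoint clopen sets $\hat{A_n}$ with $x_n\in\hat{A_n}$ for \emph{every} $n$ is impossible whenever the sequence is not relatively discrete (if $x_0\in\overline{\{x_n:n\geq1\}}$, every neighbourhood of $x_0$ contains some $x_n$ and hence meets every neighbourhood of that $x_n$), and injective sequences in $\beta\omega$ need not be discrete: take an infinite discrete $D\subseteq\beta\omega$, a point $x_0\in\overline{D}\setminus D$, and enumerate $\{x_0\}\cup D$. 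So the step you flag as needing the most care is not merely delicate---as you have set it up, it cannot be carried out, and the fix is precisely the paper's decoupling of the clopen covers $C_n$ from the index partition $A_n$.
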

\begin{proof}
Suppose that for each partition $A_n$ of $\omega$ into sets not in $\F$ there is a $B\subseteq\omega$ so that both $\bigcup_{n\in B}A_n$ and $\bigcup_{n\not\in B}A_n$
are $\F$-positive. Aiming towards a contradiction assume there is some $\bar{x}$ and $p=\F-\lim\bar{x}$. Without loss of generality $p\not\in\bar{x}$
Recursively pick $y_n\in \bar{x}$ and pairwise disjoint basic clopen neighbourhoods $\hat{C_n}$ of $y_n$, with $C_n\subseteq\omega$,
such that no $\hat{C_n}$ contains $p$ and together they cover the sequence $\bar{x}$.
Let $A_n=\{k:x_k\in \hat{C_n}\}$. Then $A_n\not\in\F$, since $p\not\in\hat{C_n}$. By assumption we can find $B\subseteq\omega$,
such that both $\bigcup_{n\in B} A_n$ and $\bigcup_{n\not\in B}A_n$ are $\F$-positive. Since $p$ is an ultrafilter it contains either
$\bigcup_{n\in B}C_n$ or $\bigcup_{n\not\in B}C_n$. In the former case, this contradicts the fact that set $\bigcup_{n\not\in B}A_n$ is $\F$-positive,
in the latter case it contradicts the fact that the set $\bigcup_{n\in B}A_n$ is $\F$-positive.
\end{proof}

\begin{corollary} No analytic filter $\F$ admits an injective $\F$-convergent sequence in $\beta\omega$.
\end{corollary}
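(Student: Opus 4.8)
The plan is to derive the Corollary from Theorem~\ref{necessary_condition} by showing that every analytic filter $\F$ satisfies the hypothesis of that theorem. So let $\langle A_n:n<\omega\rangle$ be an arbitrary partition of $\omega$ into sets none of which lies in $\F$; I must produce a $B\subseteq\omega$ such that both $\bigcup_{n\in B}A_n$ and $\bigcup_{n\notin B}A_n$ are $\F$-positive. I would argue by contraposition: suppose no such $B$ exists. This failure says precisely that for every $B\subseteq\omega$, at least one of the two complementary unions fails to be $\F$-positive, i.e.\ its complement lies in $\F$. The natural move is to convert this into a statement about the quotient of $\F$ by the partition, and to show that this quotient would have to be an ultrafilter.

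Concretely, I would define $f:\omega\to\omega$ by letting $f$ send every element of $A_n$ to $n$ (the index map of the partition), and consider the pushforward ${\mathcal U}=f_*(\F)=\{A:f^{-1}[A]\in\F\}$, so that $\F\geq_{RK}{\mathcal U}$ by the definition of the Rudin--Keisler order. The key computation is that $f^{-1}[B]=\bigcup_{n\in B}A_n$, so $B\in{\mathcal U}$ exactly when $\bigcup_{n\in B}A_n\in\F$, and the complement $\omega\setminus B$ corresponds to $\bigcup_{n\notin B}A_n$. The failure of the partition condition says that for every $B$, at least one of $\bigcup_{n\in B}A_n$, $\bigcup_{n\notin B}A_n$ has its complement in $\F$; I would translate this to show that for every $B$ either $B\in{\mathcal U}$ or $\omega\setminus B\in{\mathcal U}$, which makes ${\mathcal U}$ an ultrafilter. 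I also need ${\mathcal U}$ nonprincipal: since each $A_n\notin\F$, the singleton $\{n\}$ has $f^{-1}[\{n\}]=A_n\notin\F$, so $\{n\}\notin{\mathcal U}$ for every $n$, hence ${\mathcal U}$ contains no singletons and is nonprincipal.

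Having produced a nonprincipal ultrafilter ${\mathcal U}$ with $\F\geq_{RK}{\mathcal U}$, I would invoke Proposition~\ref{no_analytic_ultrafilters}, which states that no analytic filter is RK-above a nonprincipal ultrafilter, directly contradicting the analyticity of $\F$. This contradiction shows the partition condition of Theorem~\ref{necessary_condition} cannot fail for an analytic $\F$, so every analytic filter satisfies the hypothesis, and the theorem then yields that no injective sequence in $\beta\omega$ has an $\F$-limit, which is exactly the assertion that $\F$ admits no injective $\F$-convergent sequence.

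The step I expect to require the most care is the clean translation between the combinatorial failure (for every $B$, one of the two unions is not $\F$-positive) and the ultrafilter dichotomy for ${\mathcal U}$. One must handle the two unions as genuine complements under $f^{-1}$ and check that ``not $\F$-positive'' for $\bigcup_{n\notin B}A_n$ is the same as $B\in{\mathcal U}$ --- unwinding $\F^{+}$ and the definition of $f_*(\F)$ so that the logical quantifiers line up correctly. Everything else is bookkeeping with the pushforward, but this equivalence is where a sign error would quietly break the argument, so I would verify it explicitly before concluding.
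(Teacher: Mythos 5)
Your proposal is correct and follows essentially the same route as the paper: both use the index map of the partition to push $\F$ forward to the filter $\{B:\bigcup_{n\in B}A_n\in\F\}$, observe that failure of the partition condition makes this pushforward a nonprincipal ultrafilter, and then invoke Proposition~\ref{no_analytic_ultrafilters}. Your version is merely arranged as a direct verification of the hypothesis of Theorem~\ref{necessary_condition} (and is slightly more explicit about nonprincipality and the ultrafilter dichotomy), whereas the paper phrases it as a proof by contradiction.
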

\begin{proof} Assume $\F$ admits an injective $\F$-convergent sequence in $\beta\omega$. By theorem \ref{necessary_condition}
there must be a partition $\langle A_n:n<\omega\rangle$ of $\omega$ into sets not in $\F$ such that for each
$B\subseteq\omega$ either $\bigcup_{n\in B}A_n\in\F$ or $\bigcup_{n\not\in B}A_n\in\F$. Let $g(k)=min\{n:k\in A_n\}$. Then
$g$ shows that $\F$ is RK-above the ultrafilter $\{B:\bigcup_{n\in B}A_n\in\F\}$ so, by proposition \ref{no_analytic_ultrafilters},
$\F$ cannot be analytic.
\end{proof}

\begin{note} Note that the fact we are talking about $\beta\omega$ and not just any space without a nontrivial convergent
sequence is crucial here. The following construction shows that there are spaces without nontrivial convergent sequences
which contain $\F$-convergent sequences even for some $F_{\sigma\delta}$ filters. 
\end{note}

\begin{proposition} If $\F$ is a $P$-filter properly extending the Fréchét filter then there is a compact space $X$ which contains
an injective $\F$-convergent sequence but does not contain convergent sequences.
\end{proposition}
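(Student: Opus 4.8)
The plan is to realise $X$ as a quotient of $\beta\omega$ in which exactly the ultrafilters extending $\F$ are collapsed to one point. Let $K=\bigcap_{A\in\F}\hat A=\{p\in\beta\omega:\F\subseteq p\}$, a closed subset of $\beta\omega$, and let $X=\beta\omega/K$ be obtained by collapsing $K$ to a single point $x$, with quotient map $q$. Since $\beta\omega$ is compact Hausdorff (hence normal) and $K$ is closed, $X$ is compact Hausdorff and $q$ is a homeomorphism of the open set $\beta\omega\setminus K$ onto $X\setminus\{x\}$. The principal ultrafilters lie outside $K$ (as $\omega\setminus\{n\}\in\F$), so $q$ embeds $\omega$ injectively; put $x_n=q(n)$. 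Because $K=\bigcap_{A\in\F}\hat A$ is an intersection, closed under finite intersections, of clopen sets, compactness shows that the clopen sets $q[\hat A]$ with $A\in\F$ form a neighbourhood base at $x$; since $q[\hat A]\cap\omega=A\in\F$, every neighbourhood $U$ of $x$ satisfies $\{n:x_n\in U\}\in\F$, so $x=\F\text{-}\lim\langle x_n\rangle$.

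It remains to show $X$ has no nontrivial convergent sequence. Suppose $\langle z_j\rangle$ is injective with $z_j\to z$. If $z\neq x$, then the $z_j$ eventually lie in a neighbourhood of $z$ missing $x$, which is carried by $q^{-1}$ to an open subset of $\beta\omega$; this would give a nontrivial convergent sequence in $\beta\omega$, which is impossible. Hence $z=x$, and we may assume $z_j\neq x$ and lift each $z_j$ to $\tilde z_j\in\beta\omega\setminus K$. Unwinding the neighbourhood base at $x$, convergence $z_j\to x$ means exactly that for every $A\in\F$ we have $A\in\tilde z_j$ for all but finitely many $j$. Since $\tilde z_j\notin K$, choose $B_j\in\F$ with $B_j\notin\tilde z_j$.

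This is where the $P$-filter hypothesis enters: the countable family $\{B_j:j<\omega\}\subseteq\F$ has a pseudo-intersection $A^\ast\in\F$, i.e. $A^\ast\subseteq^\ast B_j$ for all $j$. Whenever $\tilde z_j$ is nonprincipal, $A^\ast\setminus B_j$ finite together with $B_j\notin\tilde z_j$ force $A^\ast\notin\tilde z_j$, contradicting that $A^\ast\in\tilde z_j$ eventually. So all but finitely many $\tilde z_j$ are principal, concentrated at distinct $m_j$, and then the convergence condition says that $m_j\in A$ eventually for every $A\in\F$, i.e. $\{m_j:j<\omega\}$ is an infinite pseudo-intersection of $\F$. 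Excluding this principal case is the step I expect to be the main obstacle: I must rule it out using that $\F$ admits no infinite pseudo-intersection. This is a genuine use of the hypothesis beyond the countable pseudo-intersection property and is exactly the feature separating $\F$ from the Fréchet filter (for which any $\F$-convergent sequence is already convergent, and the statement fails); indeed, the universal observation that an infinite pseudo-intersection $M$ of $\F$ makes the $M$-indexed subsequence of any injective $\F$-convergent sequence converge in the ordinary sense confirms that this non-feebleness is precisely what the conclusion requires.
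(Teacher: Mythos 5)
Your construction is the same as the paper's: you collapse the closed set $K=\{p\in\beta\omega:\F\subseteq p\}$ of ultrafilters extending $\F$, while the paper collapses $H=(\bigcap_{F\in\F}\overline{F}^{\beta\omega})\setminus\omega$; these coincide, since a filter containing the Fréchét filter extends to no principal ultrafilter. Your neighbourhood-base computation at the collapsed point, the verification that $\langle q(n):n<\omega\rangle$ is $\F$-convergent, and your treatment of nonprincipal terms via a pseudointersection are exactly the paper's argument.

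The obstacle you flag --- injective sequences of principal (isolated) points $m_j\in\omega\subseteq X$ --- is not a defect of your write-up; it is a genuine gap in the paper, and you have located it precisely. The paper chooses $A_n\in\F$ with $A_n\notin x_n$, takes a pseudointersection $A\in\F$, and asserts that $\hat{A}$ ``misses each $x_n$''; this inference is valid only for nonprincipal $x_n$, since when $x_n$ is principal at $k$ the conditions $A\subseteq^{*}A_n$ and $k\notin A_n$ do not prevent $k\in A$. In fact the proposition is false as stated. Let $B\subseteq\omega$ be infinite and co-infinite and let $\F=\{A\subseteq\omega: B\subseteq^{*}A\}$. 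This is a P-filter properly extending the Fréchét filter, and $B$ is an infinite pseudointersection of it; by your own closing observation, in \emph{any} space an injective $\F$-convergent sequence has an injective convergent subsequence (the one indexed by $B$), so no space $X$ as in the proposition exists for this $\F$. Concretely, in the quotient space $X_{\F}$ the increasing enumeration of $B$ converges to $z$ in the ordinary sense. So the hypothesis your argument needs --- that $\F$ has no infinite pseudointersection --- is genuinely stronger than ``P-filter properly extending the Fréchét filter,'' and it is the correct one: under it, your proof closes (all but finitely many lifted terms are principal by the P-filter step, and the principal ones would enumerate an infinite pseudointersection, a contradiction), whereas the paper's proof, read literally, does not.
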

\begin{proof} Let $X_{\F}$ be the quotient space $\beta\omega/H$, for $H=(\bigcap_{F\in\F} \overline{F}^{\beta\omega})\setminus\omega$ and let $z$
be the point corresponding to $H$. Then $X_{\F}$ is a compact space. We first show that $X_{\F}$ contains no injective convergent sequences.
Let $\langle x_n:n<\omega\rangle$ be an injective sequence in $X_{\F}$ and without loss of generality assume $x_n\neq z$. Notice that the sequence 
cannot converge to a point in $X_{\F}\setminus\{z\}$ (otherwise it would be a convergent sequence in $\beta\omega$). So, aiming towards a contradiction, 
assume $z=\lim_{n\to\infty} x_n$. Since $x_n\not\in H$ we can choose $A_n\in\F$ such that $A_n\not\in x_n$. Since $\F$ is a P-filter, we can
find a pseudointersection $A\in\F$, i.e. $A\subseteq^* A_n$ for all $n<\omega$. Then $A$ is a neighbourhood of $z$ which misses each $x_n$ --- a contradiction.
So it remains to show that $z=\F-\lim n$ (recall that $\omega\subseteq X_{\F}$), but this is obvious.
\end{proof}

\begin{note} This construction only works for $P$-filters. Indeed, if $\langle A_n:n<\omega\rangle$ are infinite disjoint subsets of $\omega$
and $\F$ is the filter generated by cofinite sets and sets of the form $\{B:(\forall^\infty n)(A_n\subseteq B)\}$, then in $X_{\F}$
any sequence $\langle x_n:n<\omega\rangle$ such that $A_n\in x_n$ will converge to $z$. This motivates the following question
\end{note}

\begin{question} Let $\F$ be a filter properly extending the Fréchét filter. Can we always find a compact space $X$ containing an injective $\F$-convergent
sequence but no convergent sequence?
\end{question}

After reading the proof of theorem 2 in \cite{BMZ11} one might well be tempted to ask:

\begin{question} If $\F$ is RK-above a nonprincipal ultrafilter $\U$, does it admint an injective $\F$-convergent sequence in
every compact space? Does it, at least, admit an injective $\F$-convergent sequence in $\beta\omega$.
\end{question}

We conjecture the answer to this question is yes however all our attempts at proving this turned out to lead nowhere.

\begin{ack} I would like to thank Taras Banakh for discussing the original problem with me and for encouraging me to work on it and K.~P.~Hart for reading
the paper and pointing out mistakes.
\end{ack}

\bibliographystyle{amsplain}
\bibliography{f-convergence}

\end{document}